\theoremstyle{plain}
\newtheorem{thm}{Theorem}[section]
\newtheorem{lem}[thm]{Lemma}
\newtheorem{prop}[thm]{Proposition}
\theoremstyle{definition}
\newtheorem{ex}[thm]{Example}
\numberwithin{equation}{subsection}
\newcommand{\aut}{\operatorname{Aut}}
\newcommand{\der}{\operatorname{Der}}
\newcommand{\id}{\operatorname{id}}
\newcommand{\tam}{\operatorname{Tame}}
\title{On the tame isotropy group of a derivation}
\author{Angelo Bianchi, Adriana Freitas and Marcelo Veloso}
\begin{document}
	
	\begin{abstract}
		\noindent We introduce the \textit{tame isotropy group of a derivation of a polynomial ring}. We study this group for certain triangular derivations up to three variables, for simple derivations in two variables, and for simple Shamsuddin derivations in any polynomial ring.
	\end{abstract}
	\maketitle
	
	\noindent
	\textbf{Keywords:} Derivation, (Tame) isotropy group, Automorphism Group.
	
	\textbf{2020 AMS MSC:}  13N15, 16W20 13A50.
	
	\section*{Introduction}
	
	Let $\mathcal R:=\mathbb K[X_1,\dots,X_n]$ be the polynomial ring on an algebraically closed field $\mathbb K$ of characteristic zero and let  $\aut(\mathcal R)$ be the group of $\mathbb{K}$-automorphisms of $\mathcal R$.
	
	Recall that a $\mathbb K$-derivation $D: \mathcal R \to \mathcal R$ is a $\mathbb K$-linear endomorphism of $\mathcal R$ satisfying the Leibniz rule $D(fg) = D(f)g+fD(g)$ for all $f, g \in \mathcal R$. We denote the set of $\mathbb K$-derivations of $\mathcal R$ by $\der(\mathcal R)$.
	In addition, recall that an ideal $I\subseteq \mathcal R$ is called \textit{$D$-stable} if $D(I) \subseteq I$. If the only $D$-stable ideals are the trivial ones, we say that $D$ is a \textit{simple derivation}.
	
	The definition of a simple derivation makes sense in any ring. Simple derivations are connected to other interesting topics in noncommutative algebra, commutative algebra, and algebraic geometry,  as very well mentioned in the introduction of \cite{BL}. However, examples of simple derivations are quite limited even in polynomial rings with no general recipe for a construction.
	
	The main objective of this work is to consider simple derivations in polynomial rings from the point of view of their iteration with the commuting automorphisms of the polynomial rings, which we will describe below.
	
	Let $D\in \der(\mathcal R)$. We denote by $\aut_D(\mathcal R)$ the subgroup of $\aut(\mathcal R)$ consisting of automorphisms that commute to $D$, i.e.,
	$$\aut_D(\mathcal R)=\{\rho \in \aut(R) \mid \rho D= D\rho \}=\{\rho \in \aut(R) \mid \rho D\rho^{-1}= D \}.$$
	In other words, $\aut_D(\mathcal R)$  is the isotropy of $D$ with respect to the natural action of $\aut(\mathcal R)$ on $\der(\mathcal R)$ by conjugation. Observe that simplicity is preserved by this action. We refer to $\aut_D(\mathcal R)$ as \textit{isotropy subgroup of $\mathcal R$ with respect to $D$}.
	
	From our point of view, the main distinguished works on general algebraic-geometric aspects of simplicity, we chronologically highlight the works of A. Seidenberg \cite{Seidenberg} and R. Block \cite{Block1968}, which outlined the approaches that followed since then. In subsequent decades, substantial progress was achieved by R. Hart \cite{Hart}, Brumatti, Lequain, and Levcovitz \cite{Brumatti2003}, R. Baltazar and I. Pan in \cite{B1}, and very recently P. Montagard, I. Pan and A. Rittatore in \cite{MPR}.
	
	An influential line of simple derivation examples comes from the class of Shamsuddin derivations in polynomial rings \cite{Coutinho2007,Leq,Shamsuddin1,Shamsuddin2} and from A. Nowicki \cite{Nowicki2008}. These works still guide the search for more general structural results, and the construction of simple derivations continues to be an active area (see \cite{Gavran2009,Kour2014,Maciejewski2001,MMS2,SP,Yan2019}.
	
	Recently, the structure of the isotropy subgroup of certain derivations of polynomial rings and other $\mathbb K$-algebras has been explored. Among the types of derivations most explored in this context, the following stand out: simple derivations (cf. \cite{B,BL,DY,MendesPan,MMS,MPR,RP,yan,yan2}) and locally nilpotent derivations (cf. \cite{BalVel,FW,MPR,P}) .
	
	The isotropy tame group can be used to establish results on simple derivations, but the interaction between simplicity and isotropy is subtle. Simplicity can force $\operatorname{Aut}_D(\mathcal R)$ to be trivial, but it is also possible to have an infinite isotropy group. In the following, we trace a few papers aligned to this interest:
	
	\begin{itemize}
		\item R. Baltazar in \cite{B} proved that a simple Shamsuddin derivation of $\mathbb K[X_1,X_2]$ has trivial isotropy group.
		\item L. Mendes and I. Pan in \cite{MendesPan} proved the converse of Baltazar's result and extended Baltazar's result for any simple derivation of $\mathbb K[X_1,X_2]$.
		\item L. Bertoncello and D. Levcovitz in \cite{BL} generalized Baltazar's result to a simple Shamsuddin derivation of $\mathbb K[X_1,\dots,X_n]$ with $n\ge 2$ with a proof strongly based on Lequain’s characterization of simple Shamsuddin derivations. Moreover, they conjectured that the reciprocal is also true, which was proved by D. Yan as we point out in the next item.   In this paper, the authors also conjectured that simplicity of a derivation is equivalent to the derivation having a trivial isotropy group, but this conjecture is false due to  examples of simple derivations with infinite isotropy group provided by D. Yan \cite[Theorems 2.2 and 2.4]{yan2}.
		
		\item D. Yan in \cite{yan} extended Bertoncello and Levcovitz's result by proving that $D$ is a simple Shamsuddin derivation of $\mathbb K[X_1,\dots,X_n]$, with $n\ge 2$, if, and only if, it has trivial isotropy group. Further, in \cite{yan2} Yan conjectured that the isotropy subgroup of a simple derivation of a polynomial ring is conjugate to a subgroup of translations.
	\end{itemize}
	
	The main object of this paper involves a \textit{refinement} of the definition of the isotropy subgroups: we take advantage of the distinguished automorphisms called \textit{elementary automorphisms}.  Recall that a $\mathbb K$-automorphism $\rho$ of $\mathcal R$ can be represented by $(g_1,\dots,g_n)$ where $g_i\in \mathbb K[X_1,\dots,X_n]$  and $\rho (x_i) = g_i$ for all $i\in\{1,\dots,n\}$. Under this notation, an \textit{elementary automorphism} of $\mathcal R$ is an automorphism of the form
	$$(X_1,\dots, X_n) \longmapsto (X_1,\dots, aX_i+f_i, \dots, X_n)$$
	where $i\in\{1,\dots,n\}$, $a\in \mathbb K\setminus\{0\}$ and $f_i \in \mathbb{K}[X_1,\dots, X_n]$ is independent of $X_i$, a \linebreak \textit{$j$-translation} of $\mathcal R$ is an elementary automorphism of the form
	$$(X_1,\dots,X_j,\dots, X_n) \longmapsto (X_1,\dots, X_j+c, \dots, X_n)$$
	where $j\in\{1,\dots,n\}$ and  $c\in \mathbb K\setminus\{0\}$, and a \textit{translation} of $\mathcal R$ is a composition of \linebreak $j$-translations. We denote the set of elementary automorphisms of $\mathcal R$ by $EL(\mathcal R)$. Recall that an automorphism of $\mathcal R$ is called \textit{tame} if it can be expressed as a composition of elementary automorphisms, and non-tame automorphisms are called \textit{wild}.
	
	It is straightforward to verify that any automorphism of $\mathbb{K}[X_1]$ is a tame automorphism. The same result remains valid in $\mathbb K[X_1,X_2]$ as a famous nontrivial result (cf. \cite{nagata73, Re1968,van2012book}). However, in $\mathbb{K}[X_1,X_2,X_3]$ the result fails as shown in \cite{shestakov042}.
	
	Given $D\in \der (\mathcal R)$, we define the \textit{tame isotropy group of $\mathcal R$ } as the subgroup generated by all elementary automorphisms that commute to $D$ and denote it by $\tam_D(R)$, i.e.
	$$\tam_D(\mathcal R) = \left\langle  \varphi \in \operatorname{EL}(\mathcal R) \,\middle|\, \varphi\in \aut_D(\mathcal R) \right\rangle.$$
	We reserve the symbol $\langle S \rangle$ to indicate the subgroup generated by finite compositions of elements in $S$.
	
	We see the first explicit mention of translations (as elements) in the isotropy group in \cite{yan2}, through examples and a conjecture that simplicity implies the  isotropy group acting by translations on $\mathcal R$. After that, in \cite{MPR}  the authors explore more insight on the structure of the isotropy group when it has a nontrivial translation. All of this prompted us to define this group as an object of interest in its own right.
	
	In order to illustrate and exemplify how to find the generators and structure of the tame isotropy group, we compute it for certain triangular derivations of $\mathbb K[X_1, \dots, X_n]$ for $n \leq 3$ (Section \ref{triangular}). As technical results in $\mathbb K[X_1, \dots, X_n]$, $n\ge 2$, we show that the tame isotropy group of certain simple derivations is generated only by translations (Proposition \ref{gen_translations}) and we present a class of simple derivations (Proposition \ref{no_translation}) whose isotropy group does not contain nontrivial translations.
	
	It is obvious that $$\aut_D(\mathcal R)=\{\id\} \implies \tam_D(\mathcal R)=\{\id\}.$$ However, it is still unknown when the reciprocal remains valid. One of the advantages of considering $\tam_D(\mathcal R)$ instead of $\aut_D(\mathcal R)$ is their explicit expression and the algorithmic way of testing commutation. Moreover, the translations are evidently elementary automorphisms.  In this direction, we show that every simple derivation of $\mathbb K[X_1,X_2]$ has a trivial tame isotropy group (Theorem \ref{generaln=2}) and we show that for a Shamsuddin derivation of $\mathbb K[X,Y_1,\dots,Y_n]$, $n\ge 1$, to be simple, it is equivalent to having a trivial tame isotropy group (Theorem \ref{shamsuddin}). Our proofs do not require advanced results on general automorphisms as those used in \cite{B,BL,MendesPan,MPR,yan}, respectively.
	
	\tableofcontents
	
	\section{Technical results on polynomials}
	
	The following two technical lemmas will be useful in some parts of this paper. Notice that these lemmas treat simple claims on polynomials and probably were already considered elsewhere.  We include a proof and comments on the proof, since we have not found a suitable reference.
	
	\begin{lem}
		\label{fx=fx+b}
		If  $f(X) \in \mathbb{K}[X]$ has degree $n\ge 1$ and $f(X)=f(X+c)$ for some $c \in \mathbb{K}$, then $c=0$.
	\end{lem}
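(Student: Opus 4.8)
The plan is to argue by contradiction: assume $c \neq 0$ and show that $f$ would have to be constant, contradicting $\deg f = n \ge 1$. I would present two short routes and pick the cleanest.

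The first route uses root counting. Iterating the hypothesis $f(X)=f(X+c)$ gives $f(0)=f(c)=f(2c)=\cdots=f(kc)$ for every $k\in\mathbb N$. Since $\mathbb K$ has characteristic zero and $c\neq 0$, the scalars $\{kc : k\in\mathbb N\}$ are pairwise distinct, so the polynomial $f(X)-f(0)$ has infinitely many roots and is therefore the zero polynomial; hence $f$ is constant, contradicting $n\ge 1$. This forces $c=0$.

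Alternatively, one can compare coefficients directly: writing $f(X)=\sum_{i=0}^n a_iX^i$ with $a_n\neq 0$ and expanding $f(X+c)$ by the binomial theorem, the coefficient of $X^{n-1}$ on the right-hand side is $a_{n-1}+n a_n c$, whereas on the left it is $a_{n-1}$. Equating gives $n a_n c=0$, and since $n\ge 1$, $a_n\neq 0$, and $\mathbb K$ has characteristic zero, we again get $c=0$.

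There is no real obstacle here; the only point requiring a word of care is the use of characteristic zero — it is needed to guarantee both that the points $kc$ are distinct in the first route and that $n$ is invertible in the second. I would state the lemma with the first argument, since it is the shortest and makes transparent exactly where the characteristic hypothesis enters, and remark that the coefficient comparison gives a second, purely formal proof.
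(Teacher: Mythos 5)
Both of your routes are correct, and your second one (comparing the coefficient of $X^{n-1}$ after expanding $f(X+c)$ by the binomial theorem) is precisely the paper's proof: the paper forms $F(X)=f(X+c)-f(X)$, notes that the coefficient of $X^{n-1}$ in $(X+c)^n-X^n$ is $nc$, and concludes $a_n n c=0$, hence $c=0$. Your first route, iterating the identity to get $f(0)=f(c)=f(2c)=\cdots$ and invoking the fact that a nonconstant polynomial over a field cannot take the same value at infinitely many distinct points, is a genuinely different and equally valid argument; it trades the explicit coefficient computation for the root-counting principle and, as you say, makes it very visible that characteristic zero (or at least infinitude of $\{kc\}$) is what is being used. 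Either proof is acceptable; the coefficient comparison is the more self-contained of the two since it needs nothing beyond the binomial formula, which is presumably why the paper chose it.
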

	\proof
	Write
	$$f(X)=a_nX^n+a_{n-1}X^{n-1}+\cdots+a_0,$$ with $a_n\neq0$, $n\ge1$. By hypothesis, all coefficients of the polynomial
	$F(X):=f(X+c)-f(X)$ vanish. So, expanding $(X+c)^n$ by the binomial formula, the coefficient of $X^{n-1}$ in $(X+c)^n-X^n$ is $n c$. Thus, the coefficient of $X^{n-1}$ in $F(X)$ is equal to $a_n nc$. Since $a_n\neq0$ and $n\neq0$, we conclude that $c=0$. \endproof
	
	\begin{lem}  \label{xaxzaz}
		Let $g(X)\in \mathbb K[X]$ be a polynomial of degree $r\geq 2$ such that the coefficient of the term of degree $r-1$ is zero. If $\lambda,\beta \in \mathbb K$ and $\lambda\neq0$, then $g(X)=g(\lambda X+\beta)$ if, and only if, $\beta=0$, $\lambda$ is a $s$-th root of unity and $g(X)=h(X^s)$ for some $h(X) \in \mathbb K[X]$ and $s\in\mathbb N$.
	\end{lem}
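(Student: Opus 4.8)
The plan is to handle the two implications separately. The reverse implication is a one-line computation: assuming $\beta=0$, $\lambda^s=1$ and $g(X)=h(X^s)$ for some $h\in\mathbb K[X]$, one simply substitutes to get $g(\lambda X+\beta)=g(\lambda X)=h((\lambda X)^s)=h(\lambda^sX^s)=h(X^s)=g(X)$, and nothing more is needed.

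For the forward implication I would write $g(X)=a_rX^r+a_{r-1}X^{r-1}+\cdots+a_0$ with $a_r\neq0$ and $a_{r-1}=0$ (the hypothesis), and extract information from the identity $g(X)=g(\lambda X+\beta)$ by comparing coefficients from the top down. First, comparing leading coefficients gives $a_r=a_r\lambda^r$, hence $\lambda^r=1$; thus $\lambda$ has finite multiplicative order $s$ and is an $s$-th root of unity. Second, I would compare the coefficients of $X^{r-1}$ (which is a genuine intermediate coefficient since $r\ge 2$): on the right-hand side the contributions come only from $a_r(\lambda X+\beta)^r$, which yields $a_r r\lambda^{r-1}\beta$, and from $a_{r-1}(\lambda X+\beta)^{r-1}$, which is $0$ since $a_{r-1}=0$; matching this against the coefficient $a_{r-1}=0$ of $g(X)$ gives $a_r r\lambda^{r-1}\beta=0$, and since $a_r\neq0$, $\lambda\neq0$ and $r\neq0$ in $\mathbb K$ (characteristic zero), this forces $\beta=0$. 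Third, the identity now reads $g(X)=g(\lambda X)$, i.e. $a_i=a_i\lambda^i$ for all $i$; since $\lambda^i=1$ exactly when $s\mid i$, every $a_i$ with $s\nmid i$ vanishes, so $g(X)=\sum_j a_{sj}X^{sj}=h(X^s)$ with $h(Y):=\sum_j a_{sj}Y^j$.

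I do not expect a real obstacle, as everything is elementary; the only delicate point is pinning down the role of the hypotheses. The vanishing of the degree-$(r-1)$ coefficient of $g$ is precisely what is used in the second step to conclude $\beta=0$ — without it there are genuine counterexamples, e.g. $g(X)=(X-\beta/2)^2$ with $\lambda=-1$ — and the characteristic-zero assumption enters there as well, to guarantee $r\neq0$. Finally, taking $s$ to be the exact order of $\lambda$ (rather than merely some exponent with $\lambda^s=1$) is what makes the divisibility bookkeeping in the third step yield $g(X)=h(X^s)$ cleanly.
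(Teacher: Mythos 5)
Your proof is correct: the coefficient-of-$X^{r-1}$ comparison does force $\beta=0$ exactly because $a_{r-1}=0$ and $\operatorname{char}\mathbb K=0$, and the rest ($\lambda^r=1$, then $a_i=a_i\lambda^i$ giving $g=h(X^s)$ for $s$ the order of $\lambda$) is airtight, as is the easy converse. The paper does not write out a proof at all — it only cites an analogous lemma proved ``using Taylor's expansion,'' which in characteristic zero amounts to the same binomial/coefficient comparison you carry out explicitly — so your argument is essentially the intended one, just made self-contained.
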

	
	\proof
	The argument proceeds analogously to that of  \cite[Lemma 18]{BiaVel} using Taylor's expansion.
	\endproof
	
	\begin{lem} \label{divide}
		Let $R$ be a polynomial ring on $\mathbb K$ and $f(X) \in R[X]$ be a polynomial in the variable $X$ on $R$. Then, for all $a,b \in R[X]$, the difference $f(a) - f(b)$ is divisible by $a-b$ in $R[X]$.
	\end{lem}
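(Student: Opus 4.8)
The plan is to reduce to the case of monomials. Write $f(X) = \sum_{i=0}^{n} c_i X^i$ with coefficients $c_i \in R$; then by linearity $f(a) - f(b) = \sum_{i=0}^n c_i\,(a^i - b^i)$, so it is enough to check that $a-b$ divides $a^i - b^i$ in $R[X]$ for every $i \ge 0$. For that I would invoke the telescoping identity $a^i - b^i = (a-b)\sum_{j=0}^{i-1} a^j b^{\,i-1-j}$, which is valid in any commutative ring and is confirmed by expanding the right-hand side and cancelling. Summing the resulting factorizations gives the explicit formula $f(a) - f(b) = (a-b)\sum_{i=1}^{n} c_i \sum_{j=0}^{i-1} a^j b^{\,i-1-j}$, which exhibits the asserted divisibility together with a quotient in $R[X]$.

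A second, more structural route would be to first establish the universal version of the statement, namely that $Y - Z$ divides $f(Y) - f(Z)$ in the polynomial ring $R[Y,Z]$. Indeed $R[Y,Z]/(Y-Z) \cong R[Z]$, and under this isomorphism the class of $f(Y) - f(Z)$ maps to $f(Z) - f(Z) = 0$, so $f(Y) - f(Z)$ lies in the ideal $(Y-Z)$. One then applies the $R$-algebra homomorphism $R[Y,Z] \to R[X]$ determined by $Y \mapsto a$ and $Z \mapsto b$: it carries $Y - Z$ to $a-b$ and $f(Y) - f(Z)$ to $f(a) - f(b)$, and since divisibility is preserved by ring homomorphisms, the claim follows. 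Either argument works; the first has the advantage of producing the quotient in closed form, which may be convenient for later use.

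I do not anticipate a genuine obstacle here. The only points deserving care are that all computations take place in the \emph{commutative} ring $R[X]$, so that the factorization of $a^i - b^i$ is legitimate and the substitution homomorphism is well defined, and that $a$ and $b$ are arbitrary elements of $R[X]$ rather than of $R$ — which is precisely why one phrases the identity abstractly instead of appealing to the factor theorem for a single variable.
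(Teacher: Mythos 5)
Your first argument is exactly the paper's proof: expand $f$ coefficientwise and factor each $a^i-b^i$ via the telescoping identity. The alternative route through $R[Y,Z]/(Y-Z)$ is a fine bonus, but the main argument is correct and essentially identical to the paper's.
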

	
	\begin{proof}
		Suppose  $f(X)=\sum_{i=0}^n c_i\,X^i$, $c_i \in R$. Then, $f(a)-f(b)=\sum_{i=0}^n c_i \big(a^i-b^i\big)$.
		For each $i\ge1$ the classical factorization 
		$$a^i-b^i=(a-b)\big(a^{\,i-1}+a^{\,i-2}b+\cdots+ab^{\,i-2}+b^{\,i-1}\big)$$
		holds in $R[X]$. So, each $a^i-b^i$ is divisible by $a-b$.
		Therefore, every summand on the right is divisible by $a-b$, and so is the whole sum, which proves the claim.
	\end{proof}
	
	\section{Triangular derivations}
	
	\label{triangular}
	
	Recall that a derivation of $\mathbb{K}[X_1,\dots, X_n]$ is said \textit{triangular} if
	$$D(X_1)\in \mathbb{K},\mbox{   and   } D(X_i)\in \mathbb{K}[X_1,\dots, X_{i-1}],$$
	for each $i=1,\dots,n$ in some coordinate system.
	
	\subsection{One variable case}
	
	Given that every automorphism in the polynomial ring $\mathbb K[X]$ is an elementary automorphism, it is immediate that
	$$
	\tam_D(\mathbb K[X])=\aut_D(\mathbb K[X]),
	$$
	for any $D\in \der(\mathbb K[X])$. In particular, this holds true for triangular derivations. Moreover, the explicit description of $\tam_D(\mathbb K[X])$ is the following:
	
	\begin{prop}
		Let $D=a\frac{\partial}{\partial {X}}$ be a non-zero triangular derivation  of $\mathbb K [X]$. Then
		$$
		\tam_D(\mathbb{K}[X])=\langle X+b \mid b \in \mathbb{K}\rangle.
		$$
	\end{prop}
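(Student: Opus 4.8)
The plan is to lean on the fact recalled just above that every automorphism of $\mathbb K[X]$ is elementary, so that $\tam_D(\mathbb K[X])=\aut_D(\mathbb K[X])$, and then to describe $\aut_D(\mathbb K[X])$ by a direct computation. Since $D$ is triangular and non-zero, $D(X)=a\in\mathbb K\setminus\{0\}$ is a non-zero scalar, i.e.\ $D=a\,\partial/\partial X$. An arbitrary $\rho\in\aut(\mathbb K[X])$ has the form $\rho(X)=\alpha X+\beta$ with $\alpha\in\mathbb K\setminus\{0\}$, $\beta\in\mathbb K$, with inverse $\rho^{-1}(X)=\alpha^{-1}(X-\beta)$.

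Next I would use the conjugation description $\aut_D(\mathcal R)=\{\rho\mid \rho D\rho^{-1}=D\}$. Because $\rho D\rho^{-1}$ is again a $\mathbb K$-derivation of $\mathbb K[X]$, the identity $\rho D\rho^{-1}=D$ can be tested on the single generator $X$. Computing, $\rho D\rho^{-1}(X)=\rho\big(D(\alpha^{-1}(X-\beta))\big)=\rho(\alpha^{-1}a)=\alpha^{-1}a$, so $\rho\in\aut_D(\mathbb K[X])$ if and only if $\alpha^{-1}a=a$; since $a\neq0$ this forces $\alpha=1$, with no constraint on $\beta$. Hence $\aut_D(\mathbb K[X])=\{X+\beta\mid\beta\in\mathbb K\}$. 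To finish, I would note that this set is already closed under composition and inversion, since $(X+b)\circ(X+b')=X+(b+b')$ and $(X+b)^{-1}=X-b$, so it equals the subgroup it generates, namely $\langle X+b\mid b\in\mathbb K\rangle$, giving the stated equality together with $\tam_D(\mathbb K[X])=\aut_D(\mathbb K[X])$.

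There is no real obstacle here: the argument is essentially a one-line linear computation. The only points deserving a word of care are that triangularity together with $D\neq0$ genuinely gives that $D(X)$ is a \emph{non-zero} constant (so that dividing by $a$ is legitimate), and that one is entitled to check the derivation identity $\rho D\rho^{-1}=D$ on the generator $X$ alone rather than on all of $\mathbb K[X]$.
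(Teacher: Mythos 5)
Your proposal is correct and follows essentially the same route as the paper: both reduce to the fact that every automorphism of $\mathbb K[X]$ is affine, $\rho(X)=\alpha X+\beta$, test commutation with $D$ on the generator $X$, and deduce $\alpha=1$ from $a\neq 0$ with $\beta$ unconstrained. The only cosmetic difference is that you phrase the condition as $\rho D\rho^{-1}=D$ while the paper checks $\rho D=D\rho$ directly, and you add the (trivial but harmless) verification that the translations already form a subgroup.
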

	\proof
	Recall that if $\rho \in Aut(\mathbb{K}[X])$, then $\rho$ is a tame automorphism, i.e. \linebreak $\rho(X)=cX+b$, where $b,c \in \mathbb{K}$ and $c\neq 0$. Then,
	$$ca=D(b+cX)=D(\rho(X))=\rho(D(X))=\rho(a)=a,	$$
	from where we get $c=1$ since $a \ne 0$.
	\endproof
	
	\subsection{Two variables case}
	
	Let
	$$
	D=c\frac{\partial}{\partial{X}}+f(X)\frac{\partial}{\partial{Y}}
	$$
	of $\mathbb{K}[X,Y]$, treating the cases separately $c\ne 0$ and $c=0$. The first case is simpler and will be treated directly, whereas the second requires a restriction on the polynomial $f(X)$.
	
	\begin{thm} \label{tigtc} Let $D=c\frac{\partial}{\partial{X}}+f(X)\frac{\partial}{\partial{Y}}$ be the non-zero triangular derivation of $\mathbb{K}[X,Y]$, where $0\neq c\in\mathbb K$ and $f(X) \in \mathbb{K}[X]$ has degree $n\geq 1$.  Then,
		$$
		\tam_D(\mathbb{K}[X,Y])=\left\langle (X, \beta Y+h(X)) \mid h'(X)=\dfrac{1-\beta}{c}f(X)  \text{ and } \beta \in \mathbb{K}\right\rangle.
		$$
	\end{thm}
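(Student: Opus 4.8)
The plan is to determine directly which elementary automorphisms of $\mathbb K[X,Y]$ commute with $D$, and then to observe that the set of such automorphisms is already closed under composition and inversion, so that it coincides with the subgroup it generates, namely $\tam_D(\mathbb K[X,Y])$. Recall that an elementary automorphism of $\mathbb K[X,Y]$ is either of the first type $\rho=(aX+p(Y),Y)$ with $a\in\mathbb K\setminus\{0\}$ and $p\in\mathbb K[Y]$, or of the second type $\rho=(X,\beta Y+h(X))$ with $\beta\in\mathbb K\setminus\{0\}$ and $h\in\mathbb K[X]$. In each case I would impose $\rho D=D\rho$, which is equivalent to the two equations $\rho(D(X))=D(\rho(X))$ and $\rho(D(Y))=D(\rho(Y))$, using $D(X)=c$ and $D(Y)=f(X)$.

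For an automorphism of the first type, evaluating on $X$ gives $c=\rho(c)=D(aX+p(Y))=ac+p'(Y)f(X)$. Since $\deg f=n\ge 1$, the term $p'(Y)f(X)$ has positive degree in $X$ whenever $p'\neq 0$, so we are forced to have $p'=0$, hence $p$ constant, say $p=b$, and then $ac=c$ gives $a=1$ because $c\neq 0$. Thus $\rho=(X+b,Y)$, and evaluating on $Y$ now gives $f(X)=\rho(f(X))=f(X+b)$, so Lemma \ref{fx=fx+b} yields $b=0$. Hence the only elementary automorphism of the first type commuting with $D$ is $\id$.

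For an automorphism of the second type, evaluating on $X$ is automatic since $\rho(c)=c=D(X)=D(\rho(X))$, while evaluating on $Y$ gives $f(X)=\rho(f(X))=D(\beta Y+h(X))=\beta f(X)+c\,h'(X)$, that is, $h'(X)=\frac{1-\beta}{c}f(X)$. Therefore the elementary automorphisms commuting with $D$ are exactly those of the form $(X,\beta Y+h(X))$ with $\beta\in\mathbb K\setminus\{0\}$ (the restriction $\beta\neq 0$ being of course needed for the tuple to be an automorphism) and $h'(X)=\frac{1-\beta}{c}f(X)$, which already includes $\id$ for $\beta=1$, $h=0$.

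It then remains to check that this family is a subgroup of $\aut(\mathbb K[X,Y])$, so that it equals $\tam_D(\mathbb K[X,Y])$. Composing $(X,\beta_1 Y+h_1(X))$ with $(X,\beta_2 Y+h_2(X))$ produces $(X,\beta_1\beta_2 Y+\beta_2 h_1(X)+h_2(X))$, and $(\beta_2 h_1+h_2)'=\beta_2\frac{1-\beta_1}{c}f+\frac{1-\beta_2}{c}f=\frac{1-\beta_1\beta_2}{c}f$, so the composite lies in the family; likewise the inverse of $(X,\beta Y+h(X))$ is $(X,\beta^{-1}Y-\beta^{-1}h(X))$, and $(-\beta^{-1}h)'=\frac{1-\beta^{-1}}{c}f$, so it lies in the family as well. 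This gives the claimed equality. I do not expect a serious obstacle in this argument; the only points demanding a little care are the degree bookkeeping in the first-type case (to rule out cancellation with the constant $ac$) and the correct application of Lemma \ref{fx=fx+b}.
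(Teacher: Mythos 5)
Your proof is correct and follows essentially the same route as the paper's: a case analysis of the two types of elementary automorphisms, using the degree of $f$ to force the first type to be trivial (via Lemma \ref{fx=fx+b}) and extracting the condition $h'=\frac{1-\beta}{c}f$ for the second type. Your additional verification that the resulting family is closed under composition and inversion is a harmless bonus not present in (and not strictly needed for) the paper's argument, since the statement is phrased in terms of the generated subgroup.
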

	
	\proof We proceed explicitly determining which elementary automorphisms commute to $D$:
	\begin{enumerate}
		
		\item  Let $\theta \in Aut(\mathbb{K}[X,Y])$ be the automorphism defined by
		$$
		\theta(X)=\alpha X+ g(Y)  \mbox{    and   } 	\theta(Y)=Y
		$$
		where $\alpha \in \mathbb{K}$ and $g \in \mathbb{K}[Y]$. Suppose that $\theta D= D \theta$. Then,
		$$
		c=\theta(c)=\theta(D(X))=D(\theta(X))=D(\alpha X+ g(Y))=\alpha c + g'(Y)f(X).
		$$
		This implies $g'(Y)=0$  and  $\alpha = 1$, since $c\ne 0$ and $\deg_X f=n\ge 1$. Thus,
		$$
		\theta(X)=X+ \gamma  \mbox{    and   } 	\theta(Y)=Y,
		$$
		where $\gamma \in \mathbb{K}$. Also,
		$$
		f(X)=D(Y)=D(\theta(Y))=\theta(D(Y))=\theta(f(X))=f(\theta(X))=f(X+ \gamma).
		$$
		Hence, $f(X)=f(X+ \gamma)$. Lemma \ref{fx=fx+b} implies that   $\gamma=0$. Hence,  $\theta =id$.
		\item
		
		Let $\rho \in Aut(\mathbb{K}[X,Y])$ be the automorphism
		$$
		\rho(X)=X,\,\, \rho(Y)=\beta Y +h(X)
		$$
		where $\beta \in \mathbb{K}$ and $h \in \mathbb{K}[X]$. Suppose that $\rho D= D \rho$. Then,
		$$
		D(\rho(X))=D(X)=c=\rho(c)=\rho(D(X)).
		$$
		Also,
		$$
		f(X)=\rho(f(X))=\rho(D(Y))=D(\rho(Y))=D(\beta Y +h(X))=ch'(X)+\beta f(X).
		$$
		Hence,
		$$
		\rho(X)=X,\,\, \rho(Y)=Y +h(X),
		$$
		where  $h(X)\in\mathbb K[X]$ satisfies $h'(X)=\frac{1-\beta}{c}f(X)$ ($c\ne0$) and $\beta \in \mathbb K$.
	\end{enumerate}
	
	\endproof
	
	\
	
	Given any polynomial $f\in\mathbb K[X]$ of degree $n\geq 2$, we can simplify the polynomial $f(X)$ by eliminating the coefficient of $X^{n-1}$. To do this, if $f(X)=\displaystyle\sum_{i=0}^{n} a_iX^i$ we replace $X$ by $X - \frac{a_{n-1}}{n a_n}$ and the degree coefficient $n-1$ in $f(X)$ is zero.
	
	\begin{thm} \label{tigt0}  Let $D=f(X)\frac{\partial}{\partial Y}$ be the non-zero triangular derivation of $\mathbb{K}[X,Y]$, where   $f(X) \in \mathbb{K}[X]$ has degree $n\geq 2$ and its coefficient of the term of degree $n-1$ is zero.   If $f(X)=h(X^s)$ for some $h\in\mathbb K[X]$ and $s\in \mathbb N$, then
		$$\tam_D(\mathbb{K}[X,Y])= \langle (X, Y+r(X)) \text{ and } (\lambda X, Y) \mid r(X) \in \mathbb{K}[X] \text{ and } \lambda \text{ a } \text{ $s$-th root unity}\rangle.$$
	\end{thm}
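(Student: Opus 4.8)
The plan is to follow the template of the proof of Theorem~\ref{tigtc}, adapted to the case $c=0$: we determine explicitly all elementary automorphisms of $\mathbb K[X,Y]$ that commute with $D=f(X)\frac{\partial}{\partial Y}$, since $\tam_D(\mathbb K[X,Y])$ is by definition the subgroup they generate. Recall that for $n=2$ every elementary automorphism has one of the two forms $\theta=(\alpha X+g(Y),\,Y)$ with $\alpha\in\mathbb K\setminus\{0\}$ and $g\in\mathbb K[Y]$, or $\rho=(X,\,\beta Y+h(X))$ with $\beta\in\mathbb K\setminus\{0\}$ and $h\in\mathbb K[X]$, so it suffices to decide in each family which members lie in $\aut_D(\mathbb K[X,Y])$, and then to check that the members so obtained generate the asserted group.

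For a map $\rho=(X,\beta Y+h(X))$ the relation $D(\rho(X))=0=\rho(D(X))$ holds automatically, while $D(\rho(Y))=\beta f(X)$ must equal $\rho(D(Y))=\rho(f(X))=f(X)$, which forces $\beta=1$ because $f\neq 0$; hence the commuting automorphisms of this type are exactly $(X,Y+h(X))$ with $h\in\mathbb K[X]$ arbitrary, i.e. the proposed generators $(X,Y+r(X))$. For a map $\theta=(\alpha X+g(Y),Y)$, applying $\theta D=D\theta$ to $X$ gives $0=D(\alpha X+g(Y))=g'(Y)f(X)$, hence $g'(Y)=0$ and $g(Y)=\gamma\in\mathbb K$; applying it to $Y$ gives $f(X)=\theta(f(X))=f(\alpha X+\gamma)$. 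This is precisely the situation of Lemma~\ref{xaxzaz}, whose hypotheses are met since $\deg f=n\ge 2$ and the coefficient of $X^{n-1}$ in $f$ vanishes, and it yields $\gamma=0$ together with $\alpha^s=1$. Conversely, for any $s$-th root of unity $\lambda$ one has $f(\lambda X)=h((\lambda X)^s)=h(X^s)=f(X)$, so $(\lambda X,Y)$ commutes with $D$, and likewise each $(X,Y+r(X))$ does. Therefore the elementary automorphisms in $\aut_D(\mathbb K[X,Y])$ are exactly $\{(X,Y+r(X)):r\in\mathbb K[X]\}\cup\{(\lambda X,Y):\lambda^s=1\}$, and passing to the generated subgroup gives the stated description.

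I expect no serious obstacle here: the argument is a routine case analysis parallel to Theorem~\ref{tigtc}, with the degree and $X^{n-1}$-coefficient hypotheses inserted precisely so that Lemma~\ref{xaxzaz} can be applied to the equation $f(X)=f(\alpha X+\gamma)$ (and Lemma~\ref{fx=fx+b} is the degenerate case $\alpha=1$). The single point requiring care is the bookkeeping of $s$: the description is the intended one exactly when $s$ is taken as large as possible (equivalently, as the gcd of the exponents occurring in $f$), since a smaller $s$ with $f\in\mathbb K[X^s]$ would still admit further scalings $X\mapsto\lambda X$ fixing $f$ and the two inclusions would fail to match; under that reading of the hypothesis the proof closes.
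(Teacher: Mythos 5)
Your proof is correct and follows exactly the route the paper's own (sketched) proof indicates: a direct case analysis of the two families of elementary automorphisms of $\mathbb K[X,Y]$, with Lemma~\ref{xaxzaz} applied to the resulting equation $f(X)=f(\alpha X+\gamma)$ and a routine check of the converse inclusion. Your closing remark that the stated generating set matches the set of commuting elementary automorphisms only when $s$ is taken maximal (the gcd of the exponents occurring in $f$) is a fair observation about the theorem's statement rather than a gap in your argument.
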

	\proof[Sketch of proof]
	The verification is analogous to the previous result. It suffices to directly check which elementary automorphisms commute to the derivation \(D\),
	and then apply  Lemma \ref{xaxzaz}.
	\endproof
	
	\subsection{Three variables case}
	
	In a manner similar to the previous section, we calculate the tame isotropy group of certain triangular derivations in three variables under mild assumptions.
	
	\begin{thm}
		\label{Taut}
		Let $D=f(X)\frac{\partial}{\partial{Y}}+g(X,Y))\frac{\partial}{\partial{Z}}$ be a non-zero triangular derivation of $\mathbb{K}[X,Y,Z]$, where   $f(X) \in \mathbb{K}[X]$ and  $g(X,Y) \in \mathbb{K}[X,Y]$ have no common irreducible factors in the ring $\mathbb{K}[X,Y]$.  If $f(X)=h(X^s)$ for some $h\in\mathbb K[X]$ and $s\in \mathbb N$, and $g(\lambda X,Y)=g(X,Y)$ for a $s$-th root unity $\lambda\in\mathbb K$, then
		$$\tam_D(\mathbb{K}[X,Y])= $$ $$\langle (X,Y, Z +r(X)) \text{ and } (X, \lambda Y, Z)  \mid r(X) \in \mathbb{K}[X] \text{ and } \lambda \text{ a } \text{ $s$-th root unity}\rangle.$$
	\end{thm}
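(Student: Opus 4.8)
The plan is to imitate the proof of Theorem~\ref{tigt0}, now with a third coordinate: determine by hand every elementary automorphism of $\mathbb K[X,Y,Z]$ that commutes with $D = f(X)\,\partial_Y + g(X,Y)\,\partial_Z$ and check that the ones obtained are precisely the displayed generators (reading the second generator as $(\lambda X,Y,Z)$, the form that actually commutes with $D$, and the target ring as $\mathbb K[X,Y,Z]$); no general result on automorphisms is needed. Here $D(X)=0$, $D(Y)=f(X)$, $D(Z)=g(X,Y)$. As in Theorem~\ref{tigt0} I would assume (after normalizing $f$) that the coefficient of $X^{\deg f - 1}$ vanishes; this is automatic once $s\ge 2$, hence does not disturb the invariance of $g$. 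I would take $s$ maximal with $f=h(X^s)$, so that $g(\lambda X,Y)=g(X,Y)$ for one primitive $s$-th root of unity $\lambda$ propagates by iteration to $g(\mu X,Y)=g(X,Y)$ for every $s$-th root $\mu$; and I would use that $g$ genuinely involves $Y$ (both of these needed for the statement as displayed).

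First the easy inclusion: for $r\in\mathbb K[X]$ the map $(X,Y,Z+r(X))$ commutes with $D$ since $D(Z+r(X))=g(X,Y)+r'(X)D(X)=g(X,Y)$ while $X,Y$ are untouched, and for $\mu^s=1$ the map $(\mu X,Y,Z)$ commutes with $D$ since $f(\mu X)=h(\mu^sX^s)=f(X)$ and $g(\mu X,Y)=g(X,Y)$; hence the subgroup they generate lies in $\tam_D(\mathbb K[X,Y,Z])$. For the reverse inclusion take an arbitrary $\varphi\in\operatorname{EL}(\mathbb K[X,Y,Z])$ with $\varphi D=D\varphi$ and run through the three shapes. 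If $\varphi=(\alpha X+p(Y,Z),Y,Z)$, comparing $D(\varphi(X))$ with $\varphi(D(X))=0$ gives $p_Yf(X)+p_Zg(X,Y)=0$ in $\mathbb K[X,Y,Z]$; coprimality of $f$ and $g$ (which persists in $\mathbb K[X,Y,Z]$) forces $f(X)\mid p_Z(Y,Z)$, impossible unless $p_Z=0$, and then $p_Y=0$, so $p=\gamma\in\mathbb K$; the $Y$-equation $f(\alpha X+\gamma)=f(X)$ then gives $\gamma=0$ and $\alpha^s=1$ by Lemma~\ref{xaxzaz} (or Lemma~\ref{fx=fx+b} if $\deg f\le 1$), while the $Z$-equation $g(\alpha X,Y)=g(X,Y)$ is automatic by the propagated invariance; so $\varphi=(\alpha X,Y,Z)$ with $\alpha^s=1$. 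If $\varphi=(X,\alpha Y+p(X,Z),Z)$, the $Y$-equation $(1-\alpha)f(X)=p_Z(X,Z)g(X,Y)$ has a right side involving $Y$ unless $p_Z=0$, so $p_Z=0$, $\alpha=1$, $p=p(X)$, and the $Z$-equation $g(X,Y+p(X))=g(X,Y)$ forces $p=0$ by comparing the coefficients of the next-to-top power of $Y$ (the Lemma~\ref{fx=fx+b} argument over $\mathbb K(X)$); so $\varphi=\id$. If $\varphi=(X,Y,\alpha Z+q(X,Y))$, the $Z$-equation $(1-\alpha)g(X,Y)=q_Y(X,Y)f(X)$ with coprimality forces $\alpha=1$ and $q_Y=0$, hence $q=q(X)$ and $\varphi=(X,Y,Z+q(X))$.

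Putting the three cases together, the elementary automorphisms commuting with $D$ are exactly the $(X,Y,Z+r(X))$ and the $(\lambda X,Y,Z)$ with $\lambda^s=1$, so $\tam_D(\mathbb K[X,Y,Z])$ is the subgroup they generate, which is the asserted one. I expect the main obstacle to be the $X$-coordinate case: extracting ``$p$ constant'' from $p_Yf+p_Zg=0$ by a clean divisibility argument using the coprimality hypothesis, and then ensuring that the order of the root of unity produced from $f$ via Lemma~\ref{xaxzaz} is the same $s$ for which $g$ was assumed invariant, so that the $Z$-equation imposes nothing new; this is the one place where the three hypotheses ($f=h(X^s)$, coprimality of $f$ and $g$, and $\lambda$-invariance of $g$) must cooperate, and also where the preliminary reductions (normalizing $f$, taking $s$ maximal, $g$ involving $Y$) earn their keep. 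The remaining cases and the easy inclusion are routine, in the spirit of Theorems~\ref{tigtc} and~\ref{tigt0}.
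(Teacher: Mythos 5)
Your proposal is correct and follows essentially the same route as the paper: a case analysis over the three shapes of elementary automorphisms, using the coprimality of $f$ and $g$ to kill the cross terms and Lemmas \ref{fx=fx+b} and \ref{xaxzaz} to pin down the remaining parameters (the paper handles the $X$-coordinate case by expanding $h(Y,Z)$ in powers of $Z$ and comparing coefficients, where you use the slightly cleaner direct divisibility $f\mid p_Z g \Rightarrow f\mid p_Z \Rightarrow p_Z=0$). Your reading of the misprints in the statement (the generator $(\lambda X,Y,Z)$ and the ring $\mathbb K[X,Y,Z]$) and your explicit normalizations ($s$ maximal, $g$ involving $Y$) match what the paper's own proof implicitly assumes.
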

	
	\proof Again, we proceed by explicitly determining which elementary automorphisms commute to $D$:
	\begin{enumerate}
		\item
		Let $\rho \in Aut(\mathbb{K}[X,Y, Z])$ be the elementary automorphism
		$$
		\rho(X)=X,\,\, \rho(Y)=Y \mbox{    and   } 	\rho(Z)=bZ+h(X,Y)
		$$
		where $b \in \mathbb{K}$ and $h \in \mathbb{K}[X,Y]$. Suppose that $\rho D= D \rho$. Then, $$\rho(D(Y))=f(X)=D(\rho(Y)),$$ $$\rho(D(X))=0=D(\rho(X)),$$
		and
		$$
		\begin{aligned}
			g(X,Y)= \rho(g(X,Y))&=\rho(D(Z))\\
			&=D(\rho(Z))\\
			&=D(bZ+h(X,Y))\\
			&=bD(Z)+h_{X}(X,Y)D(X)+h_{Y}(X,Y)D(Y)\\
			&=bg(X,Y)+0+h_{Y}(X,Y)f(X)
		\end{aligned}
		$$
		which only requires
		$$
		(1-b)g(X,Y)=h_{Y}(X,Y)f(X)
		$$
		If $1-b=0$, then $h_{Y}(X,Y)=0$. Thus,  $h(X,Y)$ does not involve $Y$. In this case \linebreak $\rho=(X,Y, Z +r(X))$.
		If $1-b \neq 0$, then $f(X)$ divides $g(X,Y)$, contradicting the main hypothesis on $f$ and $g$.
		
		\item
		Let $\theta \in Aut(\mathbb{K}[X,Y, Z])$ be the elementary automorphism
		$$
		\theta(X)=X,\,\, \theta(Y)=bY+h(X,Z)  \mbox{    and   } 	\theta(Z)=Z
		$$
		where $b \in \mathbb{K}$ and $h \in \mathbb{K}[X,Z]$. Suppose that $\theta D= D \theta$. First, notice that
		$$
		\theta(D(Y))=\theta(f(X))=f(\theta(X))=f(X),$$
		and
		$$
		\begin{aligned}
			D(\theta(Y))=&D(bY+h(X,Z))\\
			=&bD(Y)+h_{X}(X,Z)D(X)+h_{Z}D(Z)\\
			=&bf(X)+0+h_{Z}(X,Z)g(X,Y)
		\end{aligned}
		$$
		implying
		$$
		(1-b)f(X)=h_{Z}(X,Z)g(X,Y).
		$$
		Proceeding as in the conclusion of Item (1), we obtain
		$$
		\theta=(X,Y+s(X), Z).
		$$
		Moreover, since $D(\theta(Z))=\theta(D(Z))$, 
		$$
		g(X,Y)=g(X,Y+s(X))
		$$
		implying $s(X)=0$. In fact, given $\alpha \in \mathbb{K}$, by setting  $	r(Y)=g(\alpha, Y)$ 	we get  $r(Y)=r(Y+s(\alpha))$ and, by Lemma $\ref{fx=fx+b}$, we conclude that $s(\alpha)=0$, which means $s=0$ since $\alpha \in \mathbb K$ is arbitrary, concluding that $\theta=\id$.
		
		\item
		Let $\sigma \in Aut(\mathbb{K}[X,Y, Z])$ be the elementary automorphism
		$$
		\sigma(X)=bX+h(Y,Z),\,\, \sigma(Y)=Y \mbox{    and   } 	\sigma(Z)=Z
		$$
		where $b \in \mathbb{K}$ and $h \in \mathbb{K}[Y, Z]$. Suppose that $\sigma D= D \sigma $. Then,
		$$\begin{aligned}
			0=\sigma(0)=\sigma(D(X))=&D(\sigma(X))\\
			=&D(bX+h(Y,Z))\\
			=&bD(X)+h_{Y}D(Y)+h_{Z}D(Z)\\
			=&0+ h_{Y}f(X)+h_{Z}g(X,Y)\\
		\end{aligned}
		$$
		implying
		$$
		h_{Y}(Y,Z)f(X)+h_{Z}(Y,Z)g(X,Y)=0.
		$$
		If $h(Y,Z)\in \mathbb K$  we immediately have  $\sigma(D(X))=D(\sigma(X))$. If $h(Y,Z)$ is non-constant and
		$$
		h(Y,Z)=a_0(Y)+a_1(Y)Z+\dots+a_n(Y)Z^n,
		$$
		where $a_0(Y), \dots,  a_n(Y) \in \mathbb{K}[Y]$ and $a_n(Y) \neq 0$.
		Then,
		$$
		h_{Y}(Y,Z)=a'_0(Y)+a'_1(Y)Z+\dots+a'_{n-1}(Y)Z^{n-1}+a'_n(Y)Z^n
		$$
		and
		$$
		h_{Z}(Y,Z)=a_1(Y)+2a_2(Y)Z+\dots+(n-1)a_{n-1}(Y)Z^{n-2}+na_n(Y)Z^{n-1},
		$$
		where $a'_i=\frac{\partial(a_i)}{\partial Y}$. So,
		$$
		f(X)a'_n(Y)=0
		$$
		and
		$$
		f(X)a'_{n-1}(Y)+g(X,Y)na_n(Y)=0.
		$$
		Thus, $a'_n(Y)=0$. So, $0\neq a_n(Y)=c \in \mathbb{K}$ and
		$$
		f(X)a'_{n-1}(Y)+cng(X,Y)=0.
		$$
		In particular, $a'_{n-1}(Y)=0$ because $f(X)$ does not divide $g(X,Y)$. Therefore,  $cng(X,Y)=0$, implying  $n=0$, since $c= a_n(Y)\neq 0$. Hence, $h(Y,Z)=a_0(Y)$ and $a'_0(Y)=0$, concluding that $h(Y,Z) \in \mathbb{K}$ and
		$$
		\sigma=(bX+c,Y, Z).
		$$
		Now, once we also have $$f(X)=D(\sigma(Y))=\sigma(D(Y))=f(bX+c),$$ by Lemma $\ref{xaxzaz}$ we conclude that $c=0$, $b$ is a $s$-th root of unity, and $f(X)=h(X^s)$
		with  $h(X) \in \mathbb{K}[X]$.
		
		Finally,
		$$g(bX,Y)=\sigma(g(X,Z))=\sigma(D(Z))=D(\sigma(Z))=D(Z)=g(X,Y) $$
		implying 
		$$
		g(X,Y)=g(bX,Y).
		$$
	\end{enumerate}
	\endproof
	
	\section{On certain simple derivations on a polynomial ring in several variables}
	
	We call a $j$-translation any translation $\sigma$ in $\aut(\mathbb K[X_1,\dots,X_n])$ such that $\sigma(X_i)=X_i$ for all $i\in\{1,\dots,n\}\setminus\{j\}$ and $\sigma(X_j)=X_j+c$ for some $c\in \mathbb K\setminus\{0\}$.
	
	\begin{prop}\label{gen_translations}
		Let $D =\sum_{i=1}^n f_i \frac{\partial}{\partial X_i}$, where $n\ge 2$, $f_i \in \mathbb K[X_1,\dots,X_n]$ for all \linebreak $i\in \{ 1,\dots, n\}$.  If $D$ is a simple derivation, then  $\tam_D(\mathbb K[X_1,\dots,X_n])$ is generated only by translations. 
	\end{prop}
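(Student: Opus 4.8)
The plan is to prove something slightly stronger: every elementary automorphism that commutes with $D$ is either the identity or an $i$-translation for some $i$. Since $\tam_D(\mathcal R)$ is by definition generated by the elementary automorphisms lying in $\aut_D(\mathcal R)$, this immediately gives that $\tam_D(\mathcal R)$ is generated by translations. So I would fix $\varphi\in EL(\mathcal R)\cap\aut_D(\mathcal R)$; by definition of an elementary automorphism there are $i\in\{1,\dots,n\}$, $a\in\mathbb K\setminus\{0\}$ and $g\in S:=\mathbb K[X_1,\dots,X_{i-1},X_{i+1},\dots,X_n]$ with $\varphi(X_i)=aX_i+g$ and $\varphi(X_j)=X_j$ for $j\neq i$. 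Set $p:=\varphi(X_i)-X_i=(a-1)X_i+g$.

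The engine of the argument is the ideal $I:=(p)\subseteq\mathcal R$. First I would check that $I$ is exactly the ideal generated by the set $\{\varphi(r)-r:r\in\mathcal R\}$. Viewing $\mathcal R=S[X_i]$, the automorphism $\varphi$ restricts to the identity on $S$ and carries $X_i$ to $aX_i+g\in S[X_i]$, so $\varphi(r)=r(aX_i+g)$ for every $r\in S[X_i]$; by Lemma \ref{divide}, applied with base ring $S$ and variable $X_i$, the difference $r(aX_i+g)-r(X_i)$ is divisible by $(aX_i+g)-X_i=p$, hence every $\varphi(r)-r$ lies in $I$, while $p=\varphi(X_i)-X_i$ itself has this form. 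Next I would show $I$ is $D$-stable: for any $r\in\mathcal R$, commutation gives $D(\varphi(r)-r)=\varphi(D(r))-D(r)$, which is again of the form $\varphi(r')-r'$ with $r'=D(r)$, hence lies in $I$ by the previous step; since $D$ obeys the Leibniz rule and $I$ is generated by such elements, $D(I)\subseteq I$.

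At this point simplicity of $D$ forces $I=0$ or $I=\mathcal R$. If $I=0$ then $p=0$, so $a=1$, $g=0$ and $\varphi=\id$. If $I=\mathcal R$ then $p$ is a nonzero constant; since $g$ does not involve $X_i$, the coefficient $a-1$ of $X_i$ in $p$ must vanish, so $a=1$ and $g=p\in\mathbb K\setminus\{0\}$, meaning $\varphi$ is an $i$-translation. Either way $\varphi$ is a translation, and the proposition follows.

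I do not expect a serious obstacle once one guesses to use the displacement ideal $I=(\varphi(X_i)-X_i)$ as the $D$-stable ideal to which simplicity is applied; the only slightly delicate points are the identification of $I$ with the ideal generated by all $\varphi(r)-r$ (which is where Lemma \ref{divide} enters) and the elementary but necessary observation that, $g$ being free of $X_i$, the unit case forces $a=1$ rather than merely $p\in\mathbb K\setminus\{0\}$. It is also worth noting that the degenerate outcome $\varphi=\id$ is harmless for the statement, since the identity can always be removed from a generating set.
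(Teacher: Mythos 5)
Your proposal is correct and follows essentially the same route as the paper: both form the displacement $u=\varphi(X_i)-X_i$, observe via commutation and Lemma \ref{divide} that $D(u)=\varphi(f_i)-f_i$ is divisible by $u$, so the principal ideal $\langle u\rangle$ is $D$-stable, and then use simplicity plus the fact that $g$ is free of $X_i$ to force $a=1$ and $g\in\mathbb K$. Your extra packaging of $\langle u\rangle$ as the ideal generated by all $\varphi(r)-r$ is a harmless (and slightly more conceptual) rephrasing of the same computation.
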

	
	\proof Suppose $D$ is simple and $\sigma$ is an elementary automorphism that commutes to $D$.
	
	Let $j\in \{1,\dots,n\}$ be such that $\sigma(X_i)= X_i$ for all $i\in \{1,\dots,n\}\setminus\{ j\}$ and \linebreak $\sigma(X_j) = \alpha X_j + h$, where $\alpha \in \mathbb K$, $h \in \mathbb K[X_1,\dots,X_n]$, $\alpha\ne 0,$ and $h$ does not involve $X_j$.
	
	Since $\sigma D = D \sigma$, by applying this condition to $X_1,\dots,X_n$, we get:
	\begin{equation} \label{1.1}
		f_i=D(X_i)=D(\sigma(X_i))=\sigma(D (X_i))=\sigma(f_i)  \text{ for all } i\in \{1,\dots,n\}\setminus\{ j\}
	\end{equation}
	and
	\begin{equation} \label{1.2}
		\alpha f_j+D(h)=D(\alpha X_j+h)=D(\sigma(X_j))=\sigma (D (X_j))=\sigma(f_j).
	\end{equation}
	Setting $u:=\sigma(X_j)-X_j=(\alpha-1)X_j+h$ we get
	$$D(u)=(\alpha-1) f_j + D(h) = \sigma(f_j)-f_j,$$ which is divisible by $\sigma(X_j)-X_j=u$ by the Lemma \ref{divide}. Hence, $D(u)\in \langle u \rangle$, i.e. the principal ideal $\langle u \rangle\subseteq \mathbb K[X,Y]$ is $D$-stable.  By the simplicity of $D$, either $\langle u \rangle=0$ or $ \langle u \rangle=\mathbb K[X,Y]$, which means $u=(\alpha-1)X_j+h\in\mathbb K$. Finally, by degree comparison and the hypothesis that $h$ does not involve $X_j$, we conclude that $\alpha=1$ and $h\in\mathbb K$. Hence, we conclude that $\sigma$ is a translation. \endproof
	
	\
	
	\begin{prop}\label{no_translation}
		Let $D = \sum_{i=1}^n f_i \frac{\partial}{\partial X_i}$, where $n\ge 2$, $f_i \in \mathbb K[X_1,\dots,X_n]$ for all \linebreak $i\in \{ 1,\dots, n\}$, be a simple derivation. For each $j\in\{1,\dots, n\}$, if $\deg_{X_j} f_i\ge 1$ for some $i\in\{1,\dots,n\}$, then there is no $j$-translation in $\aut_D(\mathbb K[X_1,\dots,X_n])$.
	\end{prop}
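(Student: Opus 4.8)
The plan is to argue by contradiction: I would assume that some $j$-translation $\sigma$ lies in $\aut_D(\mathbb K[X_1,\dots,X_n])$ and show that this forces \emph{every} $f_i$ to be free of $X_j$, contradicting the hypothesis. So let $\sigma$ be given by $\sigma(X_i)=X_i$ for all $i\ne j$ and $\sigma(X_j)=X_j+c$ with $c\in\mathbb K\setminus\{0\}$ (recall that, by the definition of a $j$-translation, $c\ne 0$, so such a $\sigma$ is automatically nontrivial).

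First I would unpack the commutation relation $\sigma D=D\sigma$ by evaluating both sides on each of the variables, just as in the proofs of Proposition~\ref{gen_translations} and Theorem~\ref{tigtc}. For $i\ne j$ this gives $f_i=D(X_i)=D(\sigma(X_i))=\sigma(D(X_i))=\sigma(f_i)$, and for $i=j$, using $D(c)=0$, it gives $f_j=D(X_j)=D(\sigma(X_j))=\sigma(D(X_j))=\sigma(f_j)$. Hence $\sigma(f_i)=f_i$ for all $i\in\{1,\dots,n\}$, which says precisely that $f_i(X_1,\dots,X_{j-1},X_j+c,X_{j+1},\dots,X_n)=f_i(X_1,\dots,X_n)$ as elements of $\mathbb K[X_1,\dots,X_n]$.

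Next I would pick an index $i$ with $m:=\deg_{X_j}f_i\ge 1$ (which exists by hypothesis) and regard $f_i$ as a polynomial in $X_j$ with coefficients in $R:=\mathbb K[X_1,\dots,X_{j-1},X_{j+1},\dots,X_n]$, with leading coefficient $a\in R\setminus\{0\}$. Expanding the leading term of $f_i(\dots,X_j+c,\dots)-f_i(\dots,X_j,\dots)$ by the binomial formula, the coefficient of $X_j^{m-1}$ in this difference equals $a\,m\,c$; since $a\ne 0$, $m\ge 1$, and $c\ne 0$ (and the characteristic is zero), this coefficient is nonzero, so the difference is not the zero polynomial — contradicting the conclusion of the previous step. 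Equivalently, one invokes Lemma~\ref{fx=fx+b} over the field $\operatorname{Frac}(R)$, whose proof uses nothing but the same binomial expansion. This contradiction establishes the proposition.

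The argument is short and elementary; the only step needing a little care is the passage from the one-variable Lemma~\ref{fx=fx+b} to the relative situation over $R$ — that is, carrying out the coefficient comparison cleanly — together with keeping track of $c\ne 0$, so that the conclusion genuinely rules out every nontrivial $j$-translation. I would also note in passing that simplicity of $D$ is not actually used in this statement: the result holds for any derivation of the given form, and the simplicity hypothesis is retained only to place the proposition alongside Proposition~\ref{gen_translations} in the study of simple derivations.
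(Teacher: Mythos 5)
Your proof is correct and follows essentially the same route as the paper's: unpack $\sigma D=D\sigma$ on the variables to get $\sigma(f_i)=f_i$ for all $i$, then rule out $c\ne 0$ via the binomial coefficient comparison of Lemma~\ref{fx=fx+b}. Your extra care in passing from the one-variable lemma to $f_i$ viewed over $R=\mathbb K[X_1,\dots,\widehat{X_j},\dots,X_n]$ (and your remark that simplicity is never used) are both accurate refinements of the argument the paper gives more tersely.
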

	
	\proof  Let $j\in\{1,\dots,n\}$ and $\sigma$ a $j$-translation commuting to $D$ given by
	$$\sigma(X_j) = X_j + c, \quad \text{ where } c \in \mathbb K\setminus\{0\},$$
	and
	$$ \sigma(X_i)= X_i  \text{ for all } i\in \{1,\dots,n\}\setminus\{ j\}.$$
	
	Since $\sigma D = D \sigma$, by applying this condition to $X_1,\dots,X_n$, we have
	\[
	f_i=D(X_i)=D(\sigma(X_i))=\sigma(D (X_i))=\sigma(f_i)  \text{ for all } i\in \{1,\dots,n\}\setminus\{ j\}
	\]
	\[
	f_j=D(X_j+c)=D(\sigma(X_j))=\sigma (D (X_j))=\sigma(f_j).
	\]
	So, in a unified way, we have $\sigma(f_i)=f_i$ for all  $i\in \{1,\dots,n\}$. Thus, if $\deg_{X_j} f_i\ge 1$ for some $i\in\{1,\dots,n\}$, by Lemma \ref{fx=fx+b} we conclude that $c=0$, that means $\sigma=\id$, contradicting the assumption on $\sigma$.  \endproof

	\section{Simple derivations on a polynomial ring in two variables}
	
	In this section, we focus on simple derivations of the polynomial rings in two variables $\mathbb K[X,Y]$. We begin with some properties which are immediate consequences of simplicity:

	\begin{lem} \label{simplification}
		Let $D = f \frac{\partial}{\partial X} + g \frac{\partial}{\partial Y} \in \der_\mathbb K (\mathbb K[X,Y])$ be a simple derivation.
		
		\begin{enumerate}
			\item \label{spart1} If $f\in \mathbb K$, then $g\notin \mathbb K$. Analogously, if $g\in \mathbb K$, then $f\notin \mathbb K$.
			
			\item \label{spart2} If $g\in\mathbb K[Y]$, then $g\in\mathbb K$. 
			Similarly, if $f\in\mathbb K[X]$, then $f\in\mathbb K$.
			
			\item \label{spart3} If $g\in\mathbb K[X]\setminus\mathbb K$, then $f\notin \mathbb{K}$. Analogously, if $f\in\mathbb K[Y]\setminus\mathbb K$, then $g\notin \mathbb{K}$.
			
			\item \label{spart4} $\deg_X f + \deg_X g\ge 1$ and $\deg_Y f+\deg_Y g\ge 1$.
		\end{enumerate}
	\end{lem}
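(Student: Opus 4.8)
The plan is to prove all four parts by the same device: assuming a conclusion fails, I would exhibit a nonzero proper ideal of $\mathbb K[X,Y]$ that is $D$-stable, contradicting simplicity. Two properties of the ground field are used — that $\mathbb K$ is algebraically closed, so a one-variable polynomial of positive degree has a root, and that $\operatorname{char}\mathbb K = 0$, so every one-variable polynomial has a polynomial antiderivative.

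For (1), if $f, g \in \mathbb K$ I would split into cases. If $f = 0$ then $D(X) = 0$, so $\langle X\rangle$ is $D$-stable and proper; symmetrically if $g = 0$. If $f = a$ and $g = b$ with $a, b \in \mathbb K\setminus\{0\}$, then $D(bX - aY) = ab - ba = 0$, so $\langle bX - aY\rangle$ is $D$-stable and proper. Every case contradicts simplicity, so $f$ and $g$ cannot both lie in $\mathbb K$; note the first case also records that a simple $D$ must have $f \ne 0$ and $g \ne 0$. For (2), if $g = g(Y)$ had degree $\ge 1$ I would pick a root $\alpha \in \mathbb K$ of $g$; then $(Y - \alpha) \mid g(Y)$, so $D(Y - \alpha) = g(Y) \in \langle Y - \alpha\rangle$, and the Leibniz rule gives that $\langle Y - \alpha\rangle$ is $D$-stable and proper — contradiction. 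The claim for $f$ is the mirror image.

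For (3), assume $g = g(X) \in \mathbb K[X]\setminus\mathbb K$ and, for contradiction, $f \in \mathbb K$. If $f = 0$ then $\langle X\rangle$ is $D$-stable. If $f = c \ne 0$, choose $G \in \mathbb K[X]$ with $G'(X) = g(X)/c$; then $D(Y - G(X)) = g(X) - cG'(X) = 0$, so $\langle Y - G(X)\rangle$ is $D$-stable and proper — contradiction. The analogue with $X,Y$ and $f,g$ interchanged is identical. Finally, (4) follows formally: by part (1) we may assume $f \ne 0 \ne g$, so $\deg_X f + \deg_X g \le 0$ forces $\deg_X f = \deg_X g = 0$, i.e. $f, g \in \mathbb K[Y]$; then (2) gives $g \in \mathbb K$, (1) gives $f \in \mathbb K[Y]\setminus\mathbb K$, and the analogue of (3) gives $g \notin \mathbb K$ — a contradiction. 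Exchanging $X$ and $Y$ yields $\deg_Y f + \deg_Y g \ge 1$.

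The only step that needs an idea rather than a direct application of the Leibniz rule is the subcase of (3) with $f$ a nonzero constant: one has to spot that the first integral $Y - G(X)$, where $G$ is an antiderivative of $g/f$, is annihilated by $D$. I expect that to be the (mild) crux; all the remaining verifications are routine bookkeeping with the Leibniz rule.
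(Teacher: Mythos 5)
Your proof is correct and follows essentially the same strategy as the paper: in each part one exhibits a nonzero proper principal ideal stable under $D$ (the linear first integral $bX-aY$ for (1), the antiderivative trick $Y-G(X)$ for (3)), and (4) is deduced formally from (1)--(3). The only cosmetic differences are that in (2) you use a root $Y-\alpha$ of $g$ where the paper takes the ideal $\langle g\rangle$ itself (so your version needs algebraic closure and the paper's does not), and that you handle the degenerate subcases $f=0$ or $g=0$ explicitly, which the paper leaves implicit.
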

	\proof  \begin{enumerate}
		\item If $f=a$ and $g=b$ are constants, then  $H=bX-aY$ satisfies  $D(H)=0$. Thus, $\langle H \rangle$ is a $D$-stable ideal,  contradicting the simplicity of $D$. 
		
		\item If $g\in \mathbb K[Y]$, then $D(g)\in \langle g \rangle$, implying $\langle g\rangle$ is a $D$-stable ideal, a contradiction to the simplicity of $D$. The proof for $f\in \mathbb K[X]$ is analogous.
		
		\item Suppose $f=c\in \mathbb{K}^*$. Let $h(X)\in\mathbb K[X]\setminus\mathbb K$ such that $h'=g$ and set $H=\frac{1}{c}h-Y$. Then, $D(H)=c \frac{\partial H }{\partial X} + g \frac{\partial H}{\partial Y} = h'-g=0$. So, $ \langle H \rangle$ is a $D$-stable ideal, a contradiction to the simplicity of $D$. 
		
		\item If $f, g \notin \mathbb{K}$, item (2) implies $\deg_X g\ge 1$ and $\deg_Y f \ge 1$. If $f \notin \mathbb{K}$ and $g\in \mathbb{K}$, items (2) and (3) give $\deg_Y f\ge 1$ and $\deg_X f \ge 1$. Similarly, if $g \notin \mathbb{K}$ and $f\in \mathbb{K}$ we obtain $\deg_X g\ge 1$ and $\deg_Y g \ge 1$. The result follows immediately from these inequalities.
	\end{enumerate}
	\endproof

	The main result of this section is:
	
	\begin{thm}\label{generaln=2}
		Let $D = f \frac{\partial}{\partial X} + g \frac{\partial}{\partial Y} $ where $f,g \in \mathbb K[X,Y]$. If $D$ is a simple derivation, then $\tam_D(\mathbb K[X,Y])=\{ \id\}$.
	\end{thm}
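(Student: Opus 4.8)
The plan is to derive the theorem directly from the two structural results of the previous section---Proposition \ref{gen_translations} and Proposition \ref{no_translation}---using as the only extra input the degree inequalities of Lemma \ref{simplification}\eqref{spart4}. Throughout write $D=f\frac{\partial}{\partial X}+g\frac{\partial}{\partial Y}$.

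First I would invoke Proposition \ref{gen_translations}: since $D$ is simple, any elementary automorphism $\sigma\in\aut_D(\mathbb K[X,Y])$ must be a $j$-translation---concretely, $\sigma$ is either $\id$ or has the form $(X+a,\,Y)$ or $(X,\,Y+b)$ with the added constant nonzero. (This is exactly what the proof of Proposition \ref{gen_translations} gives for a single elementary automorphism in two variables: the coefficient $\alpha$ is forced to be $1$ and the added polynomial is forced into $\mathbb K$.) Hence $\tam_D(\mathbb K[X,Y])$ is generated by the nontrivial $X$- and $Y$-translations that commute with $D$, and it remains to show there are none.

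Then I would feed Lemma \ref{simplification}\eqref{spart4} into Proposition \ref{no_translation}. That lemma gives $\deg_X f+\deg_X g\ge 1$, hence $\deg_X f\ge 1$ or $\deg_X g\ge 1$, and Proposition \ref{no_translation} with $j=1$ then rules out every nontrivial $X$-translation in $\aut_D(\mathbb K[X,Y])$; symmetrically, $\deg_Y f+\deg_Y g\ge 1$ together with Proposition \ref{no_translation} for $j=2$ rules out every nontrivial $Y$-translation. Combined with the previous step, the only elementary automorphism commuting with $D$ is $\id$, so the generating set in the definition of $\tam_D(\mathbb K[X,Y])$ reduces to $\{\id\}$, whence $\tam_D(\mathbb K[X,Y])=\{\id\}$.

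I do not expect a genuine obstacle here: the substantive work sits inside Propositions \ref{gen_translations} and \ref{no_translation} (in particular the use of Lemma \ref{divide} and of simplicity in the former), and the present argument merely assembles them with the two halves of Lemma \ref{simplification}\eqref{spart4}. The one point to keep in mind is that $\tam_D$ is generated by \emph{elementary} automorphisms only, so once no $j$-translation commutes with $D$ there is nothing further to check, even though $\aut_D$ could in principle still contain a ``diagonal'' translation $(X+a,\,Y+b)$ (a product of two $j$-translations that is not itself elementary). If one preferred an argument independent of Proposition \ref{no_translation}, the alternative is to take an arbitrary translation $\tau=(X+a,\,Y+b)$ commuting with $D$, deduce $\tau(f)=f$ and $\tau(g)=g$ by applying $\tau D=D\tau$ to $X$ and to $Y$, observe that $U:=bX-aY$ is $\tau$-invariant, and conclude from Lemma \ref{fx=fx+b} (applied with coefficients in $\mathbb K[U]$, after completing $U$ to a linear coordinate system) that $f,g\in\mathbb K[U]$; a short case analysis on $D(U)\in\mathbb K[U]$ then exhibits a nonzero proper $D$-stable principal ideal---generated by $U-c$ for a root $c\in\mathbb K$ of $D(U)$, by $U$ itself when $D(U)=0$, or by $V-r(U)$ (with $V$ a complementary linear coordinate and $r$ a primitive of $D(V)/D(U)$) when $D(U)$ is a nonzero constant---forcing $a=b=0$.
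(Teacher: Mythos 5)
Your proof is correct and takes essentially the same route as the paper's: the paper simply inlines, rather than cites, the two arguments you invoke (the $D$-stable principal ideal $\langle\sigma(X)-X\rangle$ obtained via Lemma \ref{divide}, exactly as in Proposition \ref{gen_translations}, followed by Lemma \ref{simplification}\eqref{spart4} feeding Lemma \ref{fx=fx+b}, exactly as in Proposition \ref{no_translation}). Your assembly by citation, including the correct observation that only \emph{elementary} automorphisms matter for $\tam_D$, is a valid and cleaner packaging of the same argument.
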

	
	\proof  Let $\sigma,\tau$ be the elementary automorphism that commutes to $D$, defined by
	$$\sigma(X)= a X+h(Y)   \text{ and } \sigma(Y)=Y, \text{ where } a\in\mathbb K \text{ and } h\in \mathbb K[Y],$$
	and
	$$ \tau(Y)= bY+s(X)   \text{ and } \tau(X)=X, \text{ where } b\in\mathbb K \text{ and } s\in \mathbb K[X].$$
	
	Applying $D\sigma=\sigma D$ on $X$ and $Y$, respectively, we have
	$$af(X,Y)+g(X,Y)h'(Y)= D(\sigma(X))=\sigma D (X)= f(aX+h(Y),Y)$$
	and
	$$g(X,Y)=D(\sigma(Y))=\sigma D (Y)= g(aX+h(Y),Y),$$
	providing 
	\begin{equation} \label{cond1.1}
		\begin{cases}
			g(X,Y) = g(aX+h(Y),Y) \\
			af(X,Y)+g(X,Y)h'(Y) =  f(aX+h(Y),Y).
		\end{cases}
	\end{equation}
	
	Set $u:=\sigma(X)-X=(a-1)X+h(Y)\in\mathbb K[X,Y]$. Then,
	$$D(u)=(a-1)f(X,Y)+g(X,Y)h'(Y) = f(aX+h(Y),Y)-f(X,Y).$$
	Now, using the standard algebraic argument that $T-X$ divides $f(T,Y)-f(X,Y)$ in $\mathbb{K}[X,Y]$, we conclude that $f(aX+h(Y),Y)-f(X,Y)$ is divisible by $(aX+h(Y))-X=u$. Hence, $D(u)\in \langle u \rangle$ and the ideal $\langle u \rangle\subseteq\mathbb K[X,Y]$ is $D$-stable.
	By simplicity of $D$, the ideal $\langle u \rangle$ is either $\langle 0 \rangle$ or the whole ring. Therefore, the polynomial $u=(a-1)X+h(Y)$ is a constant polynomial, giving us $a-1=0$ and $h(Y)=c\in \mathbb K$. So, \eqref{cond1.1} becomes
	\begin{equation}  \label{4.2}
		\begin{cases}
			g(X,Y) = g(X+c,Y) \\
			f(X,Y) =  f(X+c,Y).
		\end{cases}
	\end{equation}
	
	On the other hand, we have a symmetric argument using $\tau$ that leads us to conclude
	\begin{equation}  \label{4.3}
		\begin{cases}
			g(X,Y) = g(X,Y+d) \\
			f(X,Y) =  f(X,Y+d).
		\end{cases}
	\end{equation}
	for some $d\in\mathbb K$.

	Finally, by Lemma \ref{simplification},  $\deg_X f + \deg_X g\ge 1$ and $\deg_Y f+\deg_Y g\ge 1$. By Lemma \ref{fx=fx+b} and equations \eqref{4.2} and \eqref{4.3}, we conclude that $c=d=0$ which gives $\sigma=\tau=\id$. \endproof

	Next example shows that the converse of Theorem \ref{generaln=2} is not valid as well as
	$$
	\aut_D(\mathbb{K}[X,Y]) \ne \tam_D(\mathbb{K}[X,Y]).
	$$
	
	\begin{ex}
		Let $D\in Der(\mathbb{K}[X,Y])$ defined by
		$$ D=-Y\dfrac{\partial}{\partial X}+X\dfrac{\partial}{\partial Y}.$$
		Then, $D$ is not simple, since $D(X^2+Y^2)=0$. Moreover, it is not difficult to see that $\tam_D(\mathbb{K}[X,Y])=\{ \id \}$. Indeed, if  $\sigma=(\alpha X+g(Y),Y)$ where $\alpha \in \mathbb K$ and $g(Y) \in \mathbb K[Y]$, commutes to $D$, then
		$$\sigma(D(Y)) = \sigma(X) = \alpha X+g(Y)  \text{ and } D(\sigma(Y)) = D(Y)=X$$
		implies $g(Y)=0$ and $\alpha =1$. Therefore, $\sigma=\id$. Similarly, we also conclude  that the elementary automorphism $\theta=(X, \beta Y+h(X))$, where $\beta \in \mathbb K$ and $h(X) \in \mathbb K[X]$, commutes to $D$ only if it is the trivial automorphism. However, the automorphism $\rho \in \aut(\mathbb{K}[X,Y])$ defined by
		$$\rho(X)=X-Y \mbox{ and }\rho(Y)=X+Y$$
		commutes to $D$, since
		$$D(\rho(Y))=D(X+Y)=-Y+X =  \rho(X) = \rho(D(Y)),$$
		$$D(\rho(X))=D(X-Y)=-Y-X =- \rho(Y) = \rho(D(X)),$$
		and it can be written (not uniquely) as a composition of elementary automorphisms, for instance
		\[\rho=(X-Y,Y)(2X,Y)(X,X+Y)
		\]
		but none of these elementary automorphisms is in $\tam_D(\mathbb{K}[X,Y])=\{ \id\}$.
		Hence,
		\[
		\aut_D(\mathbb{K}[X,Y]) \ne \tam_D(\mathbb{K}[X,Y]).   
		\]
		$\hfill\diamond$
	\end{ex}

	Next example illustrates us that a non simple and non locally nilpotent derivation can admit a nontrivial tame automorphism.
	
	\begin{ex}
		Let $D = bY\frac{\partial}{\partial Y}$, $b \in \mathbb{K}^*$, be a derivation of $\mathbb{K}[X,Y]$.
		Observe that $D$ is not simple, since $D(X) \subseteq \langle X \rangle$, and $D$ is not a locally nilpotent derivation, since $D^n(Y) = b^n Y$ for all $n \geq 1$. Moreover,
		$$
		\operatorname{Tame}_D(\mathbb{K}[X,Y])
		= \left\langle (\alpha X + \gamma,\, Y) \text{ and } (X,\, \beta Y)
		\;\middle|\;
		\alpha, \beta, \gamma \in \mathbb{K},\ \alpha\beta \neq 0
		\right\rangle.
		$$
		The verification is direct and we omit the details to avoid  repetition. $\hfill\diamond$ 
	\end{ex}

	\section{Shamsuddin derivations}
	
	In this section, we focus on the Shamsuddin derivations in $n+1$ variables, with $n\ge 1$. The notation for the polynomial ring follows the  standards of the literature on the subject as $\mathbb K[X, Y_1,\dots, Y_n]$.
	
	\

	We begin with a technical result:
	
	\begin{lem} \label{deg>0}
		Let
		$$D = \frac{\partial}{\partial X} + \sum_{i=1}^n (a_i(X)Y_i + b_i(X)) \frac{\partial}{\partial Y_i}$$
		be a \textit{Shamsuddin derivation} of $\mathbb K[X, Y_1, \dots, Y_n]$, where $a_i(X), b_i(X) \in \mathbb K[X]$ for all \linebreak $i\in\{1,\dots,n\}$. If $D$ is simple, then
		\begin{enumerate}
			\item \label{deg>0.1} $a_i\ne 0$;
			\item \label{deg>0.2} $\deg_X(a_i)\ge 1$ or $\deg_X(b_i)\ge 1$ for all $i\in\{1,\dots,n\}$.
		\end{enumerate}
	\end{lem}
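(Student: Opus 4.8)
The plan is to prove both items by contradiction, producing in each case an explicit nonzero proper $D$-stable ideal, which contradicts simplicity. The one structural fact I will use repeatedly is that $D(X)=1$, so that $D$ acts on $\mathbb K[X]$ as the ordinary derivative; in particular every $p(X)\in\mathbb K[X]$ admits a $D$-antiderivative inside $\mathbb K[X]$. This is the same mechanism as in Lemma \ref{simplification}.

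For \eqref{deg>0.1}, suppose $a_i=0$ for some $i$, so that $D(Y_i)=b_i(X)\in\mathbb K[X]$. Pick $B_i(X)\in\mathbb K[X]$ with $B_i'(X)=b_i(X)$ and set $H:=Y_i-B_i(X)$. Then $D(H)=D(Y_i)-D\big(B_i(X)\big)=b_i(X)-B_i'(X)=0$, so $\langle H\rangle$ is $D$-stable. It is nonzero and proper because $H$ is a nonconstant polynomial (it is linear of degree one in $Y_i$, hence not a unit). This contradicts the simplicity of $D$, and therefore $a_i\ne 0$.

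For \eqref{deg>0.2}, fix $i$ and assume toward a contradiction that $\deg_X(a_i)\le 0$ and $\deg_X(b_i)\le 0$, i.e. $a_i=\alpha$ and $b_i=\beta$ with $\alpha,\beta\in\mathbb K$. By item \eqref{deg>0.1} we have $\alpha\ne 0$, so we may set $H:=Y_i+\beta/\alpha$, which satisfies $D(H)=\alpha Y_i+\beta=\alpha H$. Hence $\langle H\rangle$ is a $D$-stable ideal, again nonzero and proper (same reason: $H$ is linear in $Y_i$), contradicting simplicity. Thus $\deg_X(a_i)\ge 1$ or $\deg_X(b_i)\ge 1$.

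I do not expect any real obstacle here: the only points needing care are checking that the principal ideals exhibited are genuinely nonzero and proper (which holds since each generator is a nonconstant polynomial of degree one in $Y_i$), and using $D(X)=1$ so that the $\mathbb K[X]$-antiderivative in the proof of \eqref{deg>0.1} is in fact a $D$-antiderivative.
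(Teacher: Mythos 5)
Your proof is correct and follows essentially the same route as the paper: both items are handled by exhibiting the same $D$-stable principal ideals ($\langle Y_i - B_i(X)\rangle$ for item (1), and your $\langle Y_i+\beta/\alpha\rangle$, which is exactly the paper's $\langle a_iY_i+b_i\rangle$ up to the nonzero scalar $\alpha$, for item (2)). Your explicit check that the generators are nonconstant, hence the ideals nontrivial, is a welcome detail the paper leaves implicit.
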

	
	\proof \begin{enumerate}
		\item Suppose that $a_i=0$ for some $i\in \{1,\dots,n\}$. Let
		$B_i(X)\in\mathbb K[X]$ such that $B_i'(X)=b_i(X)$. Then, $I=\langle Y_i - B_i(X)\rangle$ is a nontrivial $D$-stable ideal, a contradiction with the simplicity of $D$.
		
		\item Suppose $a_i, b_i \in \mathbb K$ for some index $i\in\{1,\dots,n\}$ and set $f_i:= a_i Y_i + b_i$. Then   $D(f_i) = a_i D(Y_i) = a_i (a_i Y_i + b_i) = a_i f_i$. So,  $\langle f_i \rangle$ is a 
		nontrivial $D$-stable ideal, contradicting the simplicity of $D$. Therefore, $a_i \notin \mathbb K$ or $b_i \notin \mathbb K$.
		
	\end{enumerate}
	\endproof
	
	The main result of this section is:
	
	\begin{thm} \label{shamsuddin}
		Let
		$$D = \frac{\partial}{\partial X } + \sum_{i=1}^n (a_i(X)Y_i + b_i(X)) \frac{\partial}{\partial Y_i}$$
		be a \textit{Shamsuddin derivation} of $\mathbb K[X, Y_1, \dots, Y_n]$, where $a_i(X), b_i(X) \in \mathbb K[X]$ for all \linebreak $i\in\{1,\dots,n\}$. Then
		$$	D \text{ is simple} \quad \Longleftrightarrow \quad \tam_D(\mathbb K[X, Y_1, \dots, Y_n]) = \{ \id \}.$$
	\end{thm}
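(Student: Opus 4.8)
The plan is to prove the two implications separately, exploiting the explicit, computable nature of $\tam_D$.

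\emph{($\Rightarrow$) Simplicity implies trivial tame isotropy.} This direction I would obtain by a systematic inspection of the elementary automorphisms commuting with $D$, in the spirit of the triangular-derivation computations in Section \ref{triangular} and of Theorem \ref{generaln=2}. Let $\sigma\in\mathrm{EL}(\mathbb K[X,Y_1,\dots,Y_n])$ with $\sigma D=D\sigma$; there is an index whose coordinate is genuinely moved. The first case is $\sigma(X)=\alpha X+h$ with $h$ independent of $X$ and $\sigma(Y_i)=Y_i$ for all $i$. Applying $\sigma D=D\sigma$ to $X$ gives $\alpha = \sigma(1)=\sigma(D(X))=D(\sigma(X))=D(\alpha X+h)=\alpha+\sum_i(a_iY_i+b_i)\partial h/\partial Y_i$, forcing $h\in\mathbb K$ and $\alpha=1$ (plus a translation constraint); then applying it to each $Y_i$ gives $a_i(X)Y_i+b_i(X)=a_i(X+c)Y_i+b_i(X+c)$, so $a_i(X)=a_i(X+c)$ and $b_i(X)=b_i(X+c)$, and by Lemma \ref{deg>0}(2) some $a_i$ or $b_i$ has $\deg_X\ge 1$, so Lemma \ref{fx=fx+b} forces $c=0$. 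The second, and more interesting, case is when the moved coordinate is some $Y_j$: say $\sigma(Y_j)=\beta Y_j+h$ with $h\in\mathbb K[X,Y_1,\dots,\widehat{Y_j},\dots,Y_n]$ and all other coordinates fixed. Here I would imitate the proof of Proposition \ref{gen_translations}: set $u:=\sigma(Y_j)-Y_j=(\beta-1)Y_j+h$; then $D(u)=\sigma(D(Y_j))-D(Y_j)$ is divisible by $\sigma(Y_j)-Y_j=u$ in the polynomial ring by Lemma \ref{divide}, so $\langle u\rangle$ is a $D$-stable ideal, hence $u\in\mathbb K$ by simplicity, giving $\beta=1$ and $h\in\mathbb K$, i.e. $\sigma$ is a $j$-translation. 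Now Lemma \ref{deg>0}(1) gives $a_j\ne 0$, so $\deg_{Y_j}(a_jY_j+b_j)\ge 1$, and Proposition \ref{no_translation} (applied with the $f_i$ of that statement being $1,a_1Y_1+b_1,\dots$) rules out any nontrivial $j$-translation in $\aut_D$. The general elementary automorphism where $\sigma(X)$ is nonlinear or where the multiplier/added polynomial is less constrained is handled by the same two tricks (the divisibility/Lemma \ref{divide} argument and then Lemma \ref{fx=fx+b}), so in all cases $\sigma=\id$ and $\tam_D=\{\id\}$.

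\emph{($\Leftarrow$) Trivial tame isotropy implies simplicity.} This is the contrapositive: if $D$ is a non-simple Shamsuddin derivation, I must exhibit a nontrivial elementary automorphism commuting with $D$. This is where I expect to lean on the known structure theory (Lequain–Shamsuddin) of non-simple Shamsuddin derivations, which is the main obstacle. A non-simple Shamsuddin derivation $D$ fails the Lequain–Shamsuddin simplicity criterion, which (for the single-$Y$ case $D=\partial_X+(a(X)Y+b(X))\partial_Y$) amounts to the existence of a polynomial solution $p(X)\in\mathbb K[X]$ of the Riccati-type equation $p'(X)=a(X)p(X)+b(X)$, equivalently $Y-p(X)$ defines a $D$-stable principal ideal. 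In that situation the automorphism $\tau:(X,Y)\mapsto(X,Y+\lambda\,e_p)$ — more precisely, conjugating so that $Y-p(X)$ becomes a coordinate, $D$ becomes $\partial_X+a(X)Z\partial_Z$, and then $Z\mapsto \mu Z$ or a $Y$-type translation commutes with $D$ — produces a nontrivial element of $\tam_D$. I would make this precise by first changing coordinates via the (elementary!) automorphism $(X,Y)\mapsto(X,Y-p(X))$, under which $D$ becomes $\partial_X+a(X)Z\partial_Z$, then observing that $(X,Z)\mapsto(X,\mu Z)$ commutes with this $D$ for every $\mu\in\mathbb K^*$, and transporting back: the conjugate $(X,Y)\mapsto(X,\mu Y-\mu p(X)+p(X))$ is again elementary (it has the form $(X,\mu Y + \text{poly in }X)$) and commutes with the original $D$, so $\tam_D\ne\{\id\}$. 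For the general $n$-variable case, non-simplicity propagates through at least one $Y_i$-block (or a combination), and the same "straighten the invariant hypersurface, scale, transport back" recipe yields a nontrivial elementary automorphism; organizing the bookkeeping for which block(s) fail, and checking that the straightening and its inverse remain \emph{elementary} (not merely tame), is the delicate point, but it is exactly analogous to the two-variable computation.

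In short: the forward direction is a finite case analysis using Lemmas \ref{fx=fx+b} and \ref{divide} together with Propositions \ref{gen_translations} and \ref{no_translation} and Lemma \ref{deg>0}; the reverse direction uses the Lequain–Shamsuddin description of non-simple Shamsuddin derivations to build an explicit scaling automorphism after an elementary change of coordinates. The hard part is the reverse direction's bookkeeping across the $n$ blocks and verifying that the conjugating maps stay elementary.
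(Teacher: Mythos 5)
Your proposal follows essentially the same route as the paper: the forward direction reduces to translations via the divisibility argument of Proposition \ref{gen_translations} and then eliminates them using Lemma \ref{deg>0} together with Lemma \ref{fx=fx+b}, while the reverse direction invokes Lequain's characterization to obtain a polynomial solution $h$ of $h'=a_ih+b_i$ and produces the elementary automorphism $Y_i\mapsto \mu Y_i+(1-\mu)h(X)$, which for $\mu=2$ is exactly the paper's $Y_i\mapsto 2Y_i-h(X)$. The only difference is presentational: you obtain this automorphism by conjugating a scaling through the straightening $(X,Y_i)\mapsto(X,Y_i-h(X))$ whereas the paper writes it down and verifies commutation directly, and the $n$-block bookkeeping you worry about is harmless since the automorphism touches only the single failing block.
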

	
	\proof Suppose that $D$ is simple. By Proposition \ref{gen_translations}, we have $\tam_D(\mathbb K[X, Y_1, \dots, Y_n])$ generated only by translations. Let $\sigma$ be a translation that commutes to $D$. If $\sigma \ne \id$, it differs from the identity in $X$ or in $Y_i$ for some $i\in\{1,\dots,n\}$.
	
	First, let $i\in\{1,\dots, n\}$, and consider 
	$$\sigma(X)=X, \ \sigma(Y_j)=Y_j \text{ for all } j\in\{1,\dots,n\}\setminus \{ i\}, \  \text{ and } \sigma(Y_i) = Y_i + \alpha,$$ with $\alpha \in \mathbb K$.    Since $\sigma D = D \sigma$, by applying both sides to $Y_i$, we get
	$$\sigma (D(Y_i)) = a_i(X)\sigma(Y_i) + b_i(X) = a_i(X)(Y_i + \alpha) + b_i(X),$$
	$$D(\sigma(Y_i)) = D(Y_i + \alpha) = D(Y_i) = a_i(X)Y_i + b_i(X).$$
	Then,  $a_i(X) \alpha = 0$ and, by Lemma \ref{deg>0} \eqref{deg>0.1}, $\alpha = 0$.
	
	Similarly, if we have  
	\[\sigma(Y_i)=Y_i \text{ for all } i\in \{1,\dots,n\},\ \text{ and }
	\sigma(X) =   X + \alpha, \quad \text{ where } \alpha \in \mathbb K.
	\]
	Then,
	$$1 = D(X + \alpha)=D(\sigma(X))=\sigma( D(X) )= 1$$
	$$a_i(X)Y_i+b_i(X)= D(Y_i)=D(\sigma(Y_i))=\sigma( D(Y_i)) = a_i(X + \alpha)Y_i+b_i(X + \alpha),$$
	for all $i\in \{1,\dots,n\}$. Therefore, $\alpha =0$, since $\deg_X a_i\ge 1$ or $\deg_X b_i\ge 1$, according to Lemma \ref{deg>0}  \eqref{deg>0.2} again.
	
	From these two cases, we conclude that $\tam_D(\mathbb K[X, Y_1, \dots, Y_n])=\{ \id\}$.
	
	\
	
	Reciprocally, suppose, for contradiction, that $D$ is not simple. By the classical characterization of Shamsuddin derivations, there exists an index $i\in\{1,\dots,n\}$ and a polynomial $h\in\mathbb K[X]$ satisfying
	\begin{equation}\label{eq:ODE}
		h'(X) = a_i(X)h(X) + b_i(X).
	\end{equation}
	In this case, the principal ideal $\langle Y_i - h(X) \rangle$ is $D$-stable, since
	$$D(Y_i - h(X)) = a_i(X)Y_i + b_i(X) - h'(X)
	= a_i(X)\bigl(Y_i - h(X)\bigr)
	\in \langle Y_i - h(X) \rangle.$$
	
	We now construct a nontrivial elementary automorphism $\sigma$ commuting with $D$: define $\sigma$ on the generators by
	$$  \sigma(X) = X, \qquad
	\sigma(Y_i) = 2Y_i - h(X), \qquad
	\sigma(Y_j) = Y_j \quad (j\neq i).
	$$
	We claim $\sigma D = D \sigma$. Indeed, this is immediate for $X$ and $Y_j$ with $j\neq i$. For $Y_i$, we have
	$$ \sigma(D(Y_i))
	= \sigma\bigl(a_i(X)Y_i + b_i(X)\bigr)
	= a_i(X)\bigl(2Y_i - h(X)\bigr) + b_i(X), $$
	$$D(\sigma(Y_i))
	= D\bigl(2Y_i - h(X)\bigr)
	= 2\bigl(a_i(X)Y_i + b_i(X)\bigr) - h'(X).
	$$
	Using \eqref{eq:ODE}, we obtain
	$$ D(\sigma(Y_i))
	= a_i(X)\bigl(2Y_i - h(X)\bigr) + b_i(X)
	= \sigma(D(Y_i)).
	$$
	Thus, $\sigma D = D \sigma$ and $\sigma \ne \id$, contradicting $\tam_D(\mathbb K[X, Y_1, \dots, Y_n]) = \{\id\}$. Hence, $D$ must be simple.
	\endproof

\end{document}